\newcommand{\ZZ}{\mathbf{Z}}
\newcommand{\divides}{\mathbin|}
\newtheorem{thm}{Theorem}
\newtheorem{prop}[thm]{Proposition}
\newtheorem{lem}[thm]{Lemma}
\theoremstyle{remark}
\newtheorem{rem}{Remark}
\begin{document}

\title{A subquadratic algorithm for computing the $n$-th Bernoulli number}
\author{David Harvey}
\address{School of Mathematics and Statistics, University of New South Wales, Sydney NSW 2052, Australia}
\email{d.harvey@unsw.edu.au}
\urladdr{http://web.maths.unsw.edu.au/~davidharvey/}

\begin{abstract}
We describe a new algorithm that computes the $n$th Bernoulli number in $n^{4/3 + o(1)}$ bit operations. This improves on previous algorithms that had complexity $n^{2 + o(1)}$.
\end{abstract}

\maketitle

\section{Introduction}
\label{sec:introduction}

The Bernoulli numbers $B_0, B_1, B_2, \ldots$ are rational numbers defined by
 \[  \frac{t}{e^t - 1} = \sum_{k=0}^\infty \frac{B_k}{k!} t^k. \]
Every odd-index Bernoulli number is zero except for $B_1 = -1/2$. The von Staudt--Clausen theorem states that the denominator of $B_{2n}$ is precisely the product of the primes $p$ such that $p-1 \divides 2n$, and Euler's formula
\begin{equation}
\label{eq:euler}
 B_{2n} = (-1)^{n+1} \frac{2(2n)!}{(2\pi)^{2n}} \zeta(2n),
\end{equation}
where $\zeta(s) = \sum_{k=1}^\infty k^{-s}$ is the Riemann zeta function, implies that the number of bits in the numerator of $B_{2n}$ is $\Theta(n \log n)$.

It is known that the first $n$ Bernoulli numbers may be computed simultaneously in $n^2 \log^{2+o(1)} n$ bit operations \cite{BH-bernoulli}. This bound is optimal up to logarithmic factors, as the total number of bits being computed is $\Theta(n^2 \log n)$. (In this paper, ``bit operations'' always means number of operations in the multitape Turing model, as in \cite{Pap-complexity}.)

The situation concerning computation of a \emph{single} $B_n$ is less satisfactory. As $B_n$ has $n^{1+o(1)}$ bits, it is conceivable that it can be computed in only $n^{1+o(1)}$ bit operations. However, the best published complexity bounds have the shape $n^{2 + o(1)}$, which is essentially no better than computing all of $B_0, \ldots, B_n$. This is achieved by two quite different algorithms: the ``zeta function algorithm'', which approximates $\zeta(2n)$ in the right hand side of \eqref{eq:euler} via the Euler product (this has been rediscovered numerous times --- see the discussion in \cite{Har-bernmm}), and the multimodular algorithm introduced by the author in \cite{Har-bernmm}.

In this paper we close two thirds of this gap. Our main result is:
\begin{thm}
\label{thm:main}
Let $\frac13 \leq \alpha \leq \frac12$. The Bernoulli number $B_n$ may be computed in
 \[ n^{1+\alpha} \log^{4-\alpha + o(1)} n \]
bit operations, using
 \[ O(n^{2-2\alpha} \log^{1 + 2\alpha} n) \]
bits of space.
\end{thm}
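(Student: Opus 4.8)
The plan is to work within the multimodular framework of \cite{Har-bernmm}. We may assume $n$ is even and positive, since the odd-index numbers are trivial. By the von Staudt--Clausen theorem the denominator $D$ of $B_n$ is known immediately, so it suffices to recover the integer numerator $D B_n$, which by \eqref{eq:euler} has $\Theta(n\log n)$ bits. I would compute $B_n \bmod p$ for a collection $\mathcal P$ of distinct primes $p = n^{1+o(1)}$ (each of $O(\log n)$ bits) whose product exceeds $2 D\,|B_n|$, so that $|\mathcal P| = n^{1+o(1)}$, and then reconstruct $D B_n$ by the Chinese remainder theorem and divide by $D$. The size estimate coming from \eqref{eq:euler} controls both $|\mathcal P|$ and the final rounding, and the reconstruction is handled by a product/remainder tree, so this outer layer contributes only $n^{1+o(1)}$ to the running time; crucially, no high-precision real arithmetic is needed, as each residue is obtained purely modularly.

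The core task is to evaluate $B_n \bmod p$ quickly. Following the classical congruences underlying \cite{Har-bernmm}, I would express $B_n \bmod p$ as a weighted power sum $\sum_k w_k\, k^{n-1} \bmod p$ of length $n^{1+o(1)}$, where the weights $w_k$ are determined by a fixed small auxiliary integer and are cheap to generate, and where the exponent may first be reduced modulo $p-1$. Evaluated naively this costs $n^{1+o(1)}$ per prime and $n^{2+o(1)}$ in total, matching the known bound. The improvement comes from a baby-step/giant-step decomposition of the summation range: writing the index as $k = i + gj$ with $0 \le i < g$ and $0 \le j < n^{1+o(1)}/g$, the powers within a block become evaluations of a single object at points of an arithmetic progression, so a giant-step table of roughly $n/g$ reduced powers can be reused across all blocks. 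Setting $g$ to a power of $n$ governed by $\alpha$ balances the $g$ baby steps against the $\sim n/g$ giant steps.

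The genuinely subquadratic gain, together with the stated space bound, should come from amortising the giant-step data \emph{across} primes rather than recomputing it for each $p$ in isolation. I would process $\mathcal P$ in batches, building and storing the giant-step tables for a whole batch of about $n^{1-\alpha}$ primes simultaneously, for instance by forming the relevant powers once over $\ZZ$ and distributing them to every modulus in the batch through a remainder tree. Each table holds $\approx n^{1-\alpha}$ residues of $O(\log n)$ bits, which is what accounts for the $O(n^{2-2\alpha}\log^{1+2\alpha} n)$ space, while sharing the construction across the $n^{\alpha + o(1)}$ batches drives the total time down to $n^{1+\alpha}\log^{4-\alpha+o(1)} n$. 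The constraint $\tfrac13 \le \alpha \le \tfrac12$ should then emerge from requiring the baby-step cost, the giant-step cost, and the shared table-construction cost to stay in balance without the space exceeding the claimed bound.

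The main obstacle I expect is precisely this batched baby-step/giant-step construction: arranging the giant-step powers so that they are genuinely shareable across primes, and then proving that the three competing costs combine to give exactly the claimed exponents (including the precise powers of $\log n$), rather than merely the balanced endpoint $\alpha=\tfrac12$ that a per-prime decomposition alone would yield. A secondary, more routine point is verifying that $n^{1+o(1)}$ primes of the stated size do suffice for correct reconstruction, that the weights $w_k$ and the exponent reductions can be produced within the same budget, and that primes dividing $D$ are simply discarded; these should be straightforward once the power-sum identity and its cost analysis are pinned down.
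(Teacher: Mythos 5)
There is a genuine gap, and it is fatal to the approach rather than a fixable detail. Your proposal stays inside the prime-modulus multimodular framework: you need $\sim n/\log n$ primes $p \sim n$, and for each one a congruence for $B_n \bmod p$ with $\Theta(p) = \Theta(n)$ terms, i.e.\ $\Theta(n^2/\log n)$ terms in total. The baby-step/giant-step decomposition $k = i + gj$ cannot rescue this, because the summation index sits in the \emph{base} of the power, not the exponent: $(i+gj)^{n-1}$ does not factor into a ``baby part'' times a ``giant part'' (BSGS applies to sums like $\sum_m a_m c^m$, where $c^{i+gj} = c^i (c^g)^j$). The alternative reading of your claim --- that for fixed $i$ the values $(i+gj)^{n-1}$ are evaluations of a single polynomial along an arithmetic progression --- is true but useless: that polynomial is $x^{e}$ with $e = (n-1) \bmod (p-1) \approx n$, whose degree is prime-specific and of order $n$, so any multipoint-evaluation or Shoup/Bostan--Gaudry--Schost shifting setup already costs $n^{1+o(1)}$ \emph{per prime}. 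Sharing tables across a batch of primes does not help either: the exponent reduction mod $p-1$ and the Voronoi-type weights $w_k$ both depend on $p$, and over $\ZZ$ the integers $k^{n-1}$ have $\Theta(n \log n)$ bits each. Even if the giant-step data were free, each prime still requires $\Omega(p)$ residue operations just to combine its $p$ terms, which alone is $\Omega(n^2/\log n)$. No choice of $g$ or batch size can make this framework subquadratic.

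The idea you are missing --- and it is the heart of the paper --- is to abandon prime moduli in favour of prime \emph{powers}. Proposition \ref{prop:congruence} is a Voronoi-type congruence for the Genocchi number $G_n = 2(1-2^n)B_n$ modulo $p^s$ that still has only $O(p)$ terms, each term being an evaluation of a polynomial $F_p$ of degree $s-1$. Thus one residue delivers $s \log p$ bits at the price of $p$ terms; taking $p < N \approx n^\alpha \log^{1-\alpha} n$ and $s \approx 2n^{1-\alpha}\log^\alpha n$, only $O(n^\alpha \log^{-\alpha} n)$ primes are needed, and the total number of terms across all primes is $O(N^2/\log N)$, which is where subquadratic becomes possible at all. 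The second key point is that $F_p(j) = p^{s-1} F(j/p)$ where $F$ is \emph{independent of $p$}, so a single fast multipoint evaluation of $F$ over $\ZZ/2^M\ZZ$ (with $M = O(s\log n)$) handles all points $j/p$ for a whole batch of primes with $\sum_{p} p \leq s$ at quasi-linear cost (Proposition \ref{prop:padic-bound}); partitioning the primes into $d = O(n^{3\alpha-1}\log^{1-3\alpha} n)$ such batches and reusing the space per batch yields exactly the stated time $n^{1+\alpha}\log^{4-\alpha+o(1)} n$ and space $O(n^{2-2\alpha}\log^{1+2\alpha} n)$. Your outer CRT layer is fine and standard, but the per-residue engine must be the prime-power congruence plus shared multipoint evaluation, not a BSGS rearrangement of the mod-$p$ power sum.
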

The bounds are uniform in $\alpha$, i.e.~the implied $O(\cdot)$ constant does not depend on $\alpha$, and the $o(1)$ term approaches zero as $n \to \infty$, independently of $\alpha$.

In particular, taking $\alpha = 1/3$, we obtain the time bound
 \[ n^{4/3} \log^{11/3+o(1)} n \]
and space bound $O(n^{4/3} \log^{5/3} n)$. The parameter $\alpha$ permits a time-space tradeoff. Taking $\alpha = 1/2$ increases the time to $n^{3/2} \log^{7/2 + o(1)}$, but reduces the space usage to $O(n \log^2 n)$. In this latter case, further logarithmic savings in time and space may be achieved; see Remark \ref{rem:1/2}.

Our strategy may be summarised as follows. It is technically convenient to work with the Genocchi numbers \cite[p.~49]{Com-combinatorics} given by
 \[ G_n = 2(1 - 2^n)B_n. \]
It follows immediately from the von Staudt--Clausen theorem and Fermat's little theorem that $G_n \in \ZZ$. Moreover $\log G_n = O(n \log n)$, so asymptotically the number of bits we must determine is the same as for $B_n$.

In \cite{Har-bernmm}, we gave a formula, sometimes known as a Voronoi congruence, that expresses $B_n \pmod p$ as a sum of $O(p)$ terms, for a prime $p$. Evaluating this formula for sufficiently many $p$, and combining the results using the Chinese remainder theorem, led to the overall complexity bound $n^{2 + o(1)}$ for computing $B_n$.

Proposition \ref{prop:congruence} below may be interpreted as a generalisation of this formula from a congruence modulo $p$ to a congruence modulo $p^s$, expressing $G_n \pmod{p^s}$ as a sum of $O(ps)$ terms. Evaluating this formula in the straightforward way has complexity $O(p s^2)$ (ignoring logarithmic factors), because we are doing arithmetic in $\ZZ/p^s\ZZ$, whose elements have $O(s \log p)$ bits. This approach has more flexibility than that of \cite{Har-bernmm}, as we may choose $s$ as a function of $p$ to optimise the total cost. Unfortunately, it turns out that this still leads to a quasi-quadratic complexity bound for computing $B_n$.

However, we make two key observations. First, provided $p$ is not too small compared to $s$, we can use techniques of fast polynomial arithmetic to save a factor of roughly $s$ in the evaluation of the formula of Proposition \ref{prop:congruence}. This trick is already enough to lower the overall cost to $n^{3/2 + o(1)}$ (see Remark \ref{rem:1/2}). Second, by some algebraic rearrangement and careful choice of coefficient rings, we may evaluate the formula of Proposition \ref{prop:congruence} for many primes simultaneously. This reduces the complexity further to $n^{4/3 + o(1)}$.

We do not know for what $n$ an efficient implementation of these new algorithms would be faster than existing implementations of the quasi-quadratic algorithms. This is an interesting question for further study.

\section{Congruences}

\begin{prop}
\label{prop:congruence}
Let $n \geq s \geq 1$ and let $p$ be an odd prime. Let
 \[ F_p(x) = \sum_{k=0}^{s-1} \binom{n}{k+1} G_{k+1} p^k x^{s-1-k} \in \ZZ[x]. \]
Then
 \[ G_n = \sum_{j=0}^{p-1} (-1)^j j^{n-s} F_p(j) \pmod{p^s}. \]
\end{prop}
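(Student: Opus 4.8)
The plan is to evaluate the right-hand side exactly as a sum involving alternating power sums, recognise that sum as a coefficient in a product of two exponential generating functions, and then discard the terms that are visibly divisible by $p^s$.

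First I would substitute the definition of $F_p$ and interchange the order of summation. Writing $A_m = \sum_{j=0}^{p-1} (-1)^j j^m$ for the alternating power sums, and noting that $j^{n-s} \cdot j^{s-1-k} = j^{n-1-k}$, the right-hand side becomes
\[ \sum_{k=0}^{s-1} \binom{n}{k+1} G_{k+1} p^k A_{n-1-k}. \]
All factors here are integers (the $G_m$ by von Staudt--Clausen and Fermat as noted above, the binomial coefficients and the $A_m$ trivially), so every term with $k \geq s$ is an integer multiple of $p^s$. It therefore suffices to prove that the \emph{full} sum, with $k$ running from $0$ to $n-1$, equals $G_n$ exactly; truncating at $k = s-1$ then changes the value only by a multiple of $p^s$, which is the claimed congruence.

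The heart of the argument is a generating-function identity. Let $g(t) = 2t/(e^t+1) = \sum_{k \geq 0} G_k t^k / k!$ be the exponential generating function of the Genocchi numbers, and set $S(t) = \sum_{j=0}^{p-1} (-1)^j e^{jt} = \sum_{m \geq 0} A_m t^m / m!$. Summing the geometric series and using that $p$ is odd gives the closed form $S(t) = (1+e^{pt})/(1+e^t)$. The decisive observation is that the factor $e^{pt}+1$ cancels:
\[ g(pt)\, S(t) = \frac{2pt}{e^{pt}+1} \cdot \frac{1 + e^{pt}}{1+e^t} = \frac{2pt}{1+e^t} = p\, g(t). \]
Reading off the coefficient of $t^n$ on both sides (after reindexing $l = k+1$, and using $G_0 = 0$ to kill the boundary contribution that would otherwise involve $A_n$) yields exactly $\sum_{k=0}^{n-1} \binom{n}{k+1} G_{k+1} p^k A_{n-1-k} = G_n$, which combined with the previous paragraph completes the proof.

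I expect the only real obstacle to be spotting this cancellation; once the alternating exponential sum is put in the form $(1+e^{pt})/(1+e^t)$, the identity $g(pt)\,S(t) = p\,g(t)$ is forced. The remaining points are routine bookkeeping: checking the $0^0 = 1$ convention used implicitly in $j^{n-s}$ and in the constant term of $S$, confirming integrality so that the high-$k$ tail really is divisible by $p^s$, and observing that the hypothesis $n \geq s$ is needed only to keep the exponents $n-1-k$ nonnegative and to place the truncation inside the full range.
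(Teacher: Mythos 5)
Your proof is correct and is essentially the paper's own argument: your identity $g(pt)\,S(t) = p\,g(t)$, where $S(t) = \sum_{j=0}^{p-1}(-1)^j e^{jt} = (1+e^{pt})/(1+e^t)$, is exactly the computation the paper carries out by summing the Genocchi polynomial generating function $\sum_{j=0}^{p-1}(-1)^j E(pt, j/p)$ and exploiting the same cancellation of $e^{pt}+1$, followed by the same truncation-by-integrality step. The only difference is organizational --- you interchange the order of summation and work with the alternating power sums $A_m$ instead of introducing the polynomials $G_n(x)$, and you perform the truncation reduction up front rather than at the end --- which is a cosmetic repackaging rather than a different route.
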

\begin{proof}
Our proof is modelled on \cite[Prop.~9.1.3]{Coh-nt-v2}.

The exponential generating function for the $G_n$ is
 \[ \sum_{n \geq 0} \frac{G_n}{n!} t^n = \sum_{n \geq 0} \frac{2 B_n}{n!} t^n - \sum_{n \geq 0} \frac{2 B_n}{n!} (2t)^n = \frac{2t}{e^t - 1} - \frac{4t}{e^{2t} - 1} = \frac{2t}{e^t + 1}. \]
The Genocchi polynomials
 \[ G_n(x) = \sum_{k=0}^n \binom{n}{k} G_k x^{n-k} \in \ZZ[x] \]
have exponential generating function given by
 \[ E(t, x) = \sum_{n \geq 0} \frac{G_n}{n!} t^n = \left(\sum_{k \geq 0} \frac{G_k}{k!} t^k\right) \left(\sum_{m \geq 0} \frac{x^m}{m!} t^m\right) = \frac{2t e^{tx}}{e^t + 1}. \]
Now on one hand we have
 \[ \sum_{j=0}^{p-1} (-1)^j E(pt, j/p) = \sum_{n \geq 0} \frac{p^n}{n!} \sum_{j=0}^{p-1} (-1)^j G_n(j/p) t^n, \]
while on the other hand this sum is also equal to
 \[ \sum_{j=0}^{p-1} (-1)^j \frac{2pte^{tj}}{e^{pt} + 1} = \frac{2pt}{e^{pt} + 1} \sum_{j=0}^{p-1} (-e^t)^j = \frac{2pt}{e^t + 1} = p \sum_{n \geq 0} \frac{G_n}{n!} t^n. \]
Equating coefficients of $t^n$ and using $G_0 = 0$ we obtain
 \[ G_n = p^{n-1} \sum_{j=0}^{p-1} (-1)^j G_n(j/p) = \sum_{j=0}^{p-1} (-1)^j \sum_{k=0}^{n-1} \binom{n}{k+1} G_{k+1} j^{n-k-1} p^k. \]
Truncating this sum modulo $p^s$ yields the desired congruence.
\end{proof}

\section{Algorithms}

Let $n \geq s \geq 1$, and define
 \[ F(x) = \sum_{k=0}^{s-1} \binom{n}{k+1} G_{k+1} x^{s-1-k} \in \ZZ[x]. \]
Note that $F(x)$ depends on $n$ and $s$, but (crucially) not on $p$. For any prime $p \geq 3$ and any $0 \leq j < p$ we have $F_p(j) = p^{s-1} F(j/p)$. We obtain the following bounds for the coefficients of $F(x)$ and for $F_p(j)$.
\begin{lem}
Let $n \geq 1$ and $0 \leq k < n$. Then
 \[ \left| \binom{n}{k+1} G_{k+1}\right| \leq 7 (n/\pi)^{k+1}. \]
\end{lem}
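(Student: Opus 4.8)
The plan is to reduce everything to Euler's formula \eqref{eq:euler}, since that is what controls the size of $B_{k+1}$. First I would dispose of the degenerate cases. When $k+1$ is odd and larger than $1$ the Bernoulli number $B_{k+1}$ vanishes, hence so does $G_{k+1}$, and the inequality holds trivially. The case $k+1=1$ is also immediate: here $G_1 = 2(1-2)B_1 = 1$, so the left-hand side is $n$ while the right-hand side is $7n/\pi > 2n$. (Note also that $k<n$ guarantees $\binom{n}{k+1}$ is a genuine, positive binomial coefficient.)

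This leaves the main case $k+1 = 2m$ with $m \geq 1$. Here I would start from the identity $G_{2m} = 2(1-2^{2m})B_{2m}$ and insert the absolute value of \eqref{eq:euler}, obtaining
\[ |G_{2m}| = \frac{4(2^{2m}-1)(2m)!}{(2\pi)^{2m}}\,\zeta(2m). \]
The factor $(2m)!$ appearing here is exactly what cancels against the binomial coefficient. Bounding $\binom{n}{2m} \leq n^{2m}/(2m)!$ and multiplying, the factorials disappear and I am left with
\[ \binom{n}{2m}|G_{2m}| \leq \frac{4(2^{2m}-1)}{(2\pi)^{2m}}\,\zeta(2m)\,n^{2m}. \]

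The remaining work is to extract the advertised shape $(n/\pi)^{k+1}$ and to pin down the constant. The key simplification is that $2^{2m}-1 < 2^{2m}$, whence $(2^{2m}-1)/(2\pi)^{2m} < 1/\pi^{2m}$; this is precisely the point where the $2^{2m}$ coming from the Genocchi normalisation absorbs a factor $2^{2m}$ from the $(2\pi)^{2m}$ in Euler's formula and leaves a clean power of $\pi$, explaining why $\pi$ (and not $2\pi$) appears in the statement. Finally, since $\zeta(s)$ is decreasing for $s>1$ we have $\zeta(2m) \leq \zeta(2) = \pi^2/6$ for all $m \geq 1$, so
\[ \binom{n}{2m}|G_{2m}| \leq 4\cdot\frac{\pi^2}{6}\,(n/\pi)^{2m} = \frac{2\pi^2}{3}\,(n/\pi)^{2m}. \]
I do not expect a genuine obstacle; the proof is essentially a careful bookkeeping of constants. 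The only point demanding attention is verifying that this constant really stays below the threshold, namely $2\pi^2/3 \approx 6.58 < 7$, which is exactly what dictates the choice of $7$ on the right-hand side.
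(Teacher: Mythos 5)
Your proposal is correct and follows essentially the same route as the paper's own proof: the same case split ($k=0$, even $k\ge 2$, and $k+1$ even), the same use of Euler's formula with the cancellation of $(2m)!$ against the binomial coefficient, the bound $2^{2m}-1 < 2^{2m}$ to turn $2\pi$ into $\pi$, and the bound $\zeta(2m)\le\zeta(2)$, arriving at the identical constant $4\zeta(2) = 2\pi^2/3 \approx 6.58 < 7$.
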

\begin{proof}
For $k = 0$ the assertion is that $n \leq 7(n/\pi)$. For even $k \geq 2$, we have $G_{k+1} = 0$. For odd $1 \leq k < n$, by \eqref{eq:euler} we have
\begin{align*}
 \left| \binom{n}{k+1} G_{k+1}\right| & = \frac{n!}{(k+1)!(n-k-1)!} 2 (2^{k+1} - 1) \frac{2 (k+1)!}{(2\pi)^{k+1}} \zeta(k+1)  \\
 & \leq 4 \zeta(2) \frac{n!}{(n-k-1)!} \frac{1}{\pi^{k+1}} \leq 6.579... (n/\pi)^{k+1}. \qedhere
\end{align*}

\end{proof}
\begin{lem}
\label{lem:Fp-bound}
Let $n \geq s \geq 4$. Let $p$ be an odd prime and let $0 \leq j < p$. Then
 \[ |F_p(j)| \leq 3 (np/\pi)^{s+1}. \]
\end{lem}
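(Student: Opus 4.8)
The plan is to bound $|F_p(j)|$ termwise using the coefficient estimate of the previous lemma, and then to recognise the resulting sum as a geometric progression dominated by its final term. Starting from
\[ F_p(j) = \sum_{k=0}^{s-1} \binom{n}{k+1} G_{k+1}\, p^k j^{s-1-k}, \]
I would first apply the triangle inequality and then bound each coefficient by $|\binom{n}{k+1} G_{k+1}| \leq 7(n/\pi)^{k+1}$. The hypothesis $n \geq s$ guarantees $0 \leq k \leq s-1 < n$ throughout, so the previous lemma indeed applies for every index in the summation range.

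Next I would use $0 \leq j < p$ to replace each $j^{s-1-k}$ by $p^{s-1-k}$ (valid for every $k$, including the degenerate term $k = s-1$, where the factor is simply $1$). Combining $p^k \cdot p^{s-1-k} = p^{s-1}$ pulls a common factor out front and leaves
\[ |F_p(j)| \leq 7 p^{s-1} \sum_{k=0}^{s-1} (n/\pi)^{k+1}. \]
The decisive point is that $s \geq 4$ forces $n \geq 4$, hence $n/\pi > 1$, so the progression is \emph{increasing} and is controlled by its largest term; summing gives $\sum_{k=0}^{s-1}(n/\pi)^{k+1} \leq (n/\pi)^{s+1}/((n/\pi)-1)$.

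It then remains to compare this with the target $3(np/\pi)^{s+1} = 3(n/\pi)^{s+1}p^{s+1}$. Cancelling the common factor $(n/\pi)^{s+1}p^{s-1}$ reduces the claim to the single numerical inequality $7 \leq 3p^2\big((n/\pi)-1\big)$, at which point I would invoke $p \geq 3$ to get $3p^2 \geq 27$ and $n \geq 4$ to get $(n/\pi)-1 \geq (4-\pi)/\pi$; the product $27\,(4-\pi)/\pi \approx 7.38$ exceeds $7$, closing the estimate. I expect the only genuine obstacle to be this final margin: the inequality holds only barely, so it is essential to use the exact hypotheses $p \geq 3$ and $n \geq 4$ (the latter being precisely what $s \geq 4$ buys), and to sum the geometric series sharply rather than use the lossy bound $\sum_k (n/\pi)^{k+1} \leq s\,(n/\pi)^s$, which would spuriously introduce a factor of $s$ and destroy the constant.
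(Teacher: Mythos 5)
Your proposal is correct and follows essentially the same route as the paper: triangle inequality with the coefficient bound $7(n/\pi)^{k+1}$, replacing $j^{s-1-k}$ by $p^{s-1-k}$ to extract $p^{s-1}$, summing the geometric series sharply, and using $p \geq 3$ and $n \geq 4$ to absorb the constant. Your final reduction $7 \leq 3p^2\bigl((n/\pi)-1\bigr)$, verified via $27(4-\pi)/\pi \approx 7.38$, is exactly the paper's computation $\tfrac{7}{3^2(4/\pi-1)} = 2.846\ldots \leq 3$ in rearranged form.
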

\begin{proof}
By the previous lemma we have
\begin{align*}
 |F_p(j)| & \leq \sum_{k=0}^{s-1} 7 (n/\pi)^{k+1} p^k j^{s-1-k}
            \leq 7 p^{s-1} \sum_{k=0}^{s-1} (n/\pi)^{k+1} = 7 p^{s-1} (n/\pi) \frac{(n/\pi)^s - 1}{n/\pi - 1} \\
          & \leq \frac{7}{3^2(4/\pi - 1)} (np/\pi)^{s+1} = 2.846... (np/\pi)^{s+1}. \qedhere
\end{align*}
\end{proof}

We recall some standard results concerning the complexity of integer and polynomial arithmetic; all of this may be found in \cite{vzGG-compalg}.

Let $R = \ZZ/2^M \ZZ$ where $M \geq 1$. Addition and subtraction in $R$ require $O(M)$ bit operations. Multiplication in $R$ costs $M \log^{1+o(1)} M$ bit operations, using $O(M)$ bits of space, via FFT methods. Division in $R$ (where possible) has the same asymptotic time and space complexity as multiplication, using Newton's method. If $G \in R[x]$ is a polynomial of degree $s$, and $x_1, \ldots, x_t \in R$, with $t \leq s$, then we may simultaneously evaluate $G(x_1), \ldots, G(x_t) \in R$ using a fast multipoint evaluation algorithm in $s M \log^{1+o(1)} (s M) \log s$ bit operations. The simplest such algorithms have space complexity $O(s M \log s)$, but this can be reduced to $O(s M)$ by the method of \cite[Lemma 2.1]{vzGS-frobenius}.

Now let $p$ be an odd prime, $s \geq 1$, and $R = \ZZ/p^s\ZZ$. We assume here that $p \leq n$ and $s \leq n$. The results are similar: addition and subtraction in $R$ require $O(s \log p) = O(s \log n)$ bit operations, and multiplication in $R$ costs $s \log^{2+o(1)} n$ bit operations.

Finally we mention that the primes $p \leq N$ may be enumerated by a straightforward sieve method in $N^{1+o(1)}$ bit operations.

\begin{prop}
\label{prop:padic-bound}
Let $n \geq s \geq 4$ and let $N \leq n$. Let $P$ be a set of primes with $3 \leq p < N$ for all $p \in P$. Assume that $\sum_{p \in P} p \leq s$. Then the residues $G_n \pmod{p^s}$ may be computed for all $p \in P$ simultaneously in
 \[ s^2 \log^{3+o(1)} n \]
bit operations, using
 \[ O(s^2 \log n) \]
bits of space.
\end{prop}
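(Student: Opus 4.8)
The plan is to evaluate the congruence of Proposition~\ref{prop:congruence} for all $p \in P$ at once, exploiting the fact that the polynomial $F$ is independent of $p$. First I would recall that $F_p(j) = p^{s-1} F(j/p)$, so that the quantities $F_p(j)$ for $0 \le j < p$ and $p \in P$ are all obtained by evaluating the single degree-$(s-1)$ polynomial $F$ at the rational points $j/p$. Since $\sum_{p \in P} p \le s$, there are at most $s$ such points. By Lemma~\ref{lem:Fp-bound} and $p < N \le n$, each integer $F_p(j)$ satisfies $|F_p(j)| \le 3(n^2/\pi)^{s+1}$, so I would fix $M = O(s \log n)$ with $2^{M-1}$ exceeding this bound and work in the coefficient ring $R = \ZZ/2^M\ZZ$, in which each odd prime $p$ is invertible.

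The heart of the computation is a single fast multipoint evaluation. After computing the coefficients $\binom{n}{k+1} G_{k+1}$ of $F$ as integers (which requires $G_1, \dots, G_s$, obtainable in $s^2 \log^{2+o(1)} n$ bit operations, together with the binomial coefficients via a simple recurrence) and reducing them modulo $2^M$, I would evaluate $F$ at the $\le s$ points $j/p \in R$ simultaneously. With degree bounded by $s$ and $M = O(s\log n)$, the stated multipoint evaluation result costs $sM \log^{1+o(1)}(sM)\log s$ bit operations; since $s \le n$ gives $sM = O(s^2 \log n)$ and hence $\log(sM) = O(\log n)$ and $\log s = O(\log n)$, this collapses to $s^2 \log^{3+o(1)} n$, using $O(sM) = O(s^2 \log n)$ bits of space via the space-reduced variant. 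Multiplying each result by $p^{s-1} \bmod 2^M$ and lifting to the balanced representative in $(-2^{M-1}, 2^{M-1}]$ then recovers each integer $F_p(j)$ exactly.

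It remains to assemble the residues. For each $p \in P$ I would reduce each recovered $F_p(j)$ modulo $p^s$ and compute $j^{n-s} \bmod p^s$ by binary exponentiation; each exponentiation uses $O(\log n)$ multiplications in $\ZZ/p^s\ZZ$ at $s\log^{2+o(1)} n$ bit operations apiece, so over all $\le s$ points this costs $s^2 \log^{3+o(1)} n$. Forming $\sum_{j=0}^{p-1} (-1)^j j^{n-s} F_p(j) \bmod p^s$ for each $p$ then involves $\sum_{p}p \le s$ further products in $\ZZ/p^s\ZZ$, comfortably within budget, and by Proposition~\ref{prop:congruence} this sum is $G_n \bmod p^s$.

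The step I expect to be most delicate is the bookkeeping that keeps every auxiliary operation inside the target bounds: in particular, verifying that $M = O(s\log n)$ suffices to recover the $F_p(j)$ exactly, and that the polylogarithmic factors in the multipoint evaluation and the modular exponentiations really do aggregate to $\log^{3+o(1)} n$ rather than a higher power. Here I would use that $s \le n$, so that $\log(sM) = O(\log n)$ and $\log s = O(\log n)$, and that each binary exponentiation $j^{n-s} \bmod p^s$ uses only $O(\log n)$ multiplications. The genuinely new idea, by contrast, is simply the reduction of many per-prime evaluations to one multipoint evaluation of the $p$-independent polynomial $F$ over a fixed ring $\ZZ/2^M\ZZ$.
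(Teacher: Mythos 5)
Your proposal is correct and follows essentially the same route as the paper's own proof: fix $M = O(s\log n)$ via Lemma~\ref{lem:Fp-bound}, perform one fast multipoint evaluation of the $p$-independent polynomial $F$ at all $\leq s$ points $j/p$ in $\ZZ/2^M\ZZ$, lift the values $F_p(j)$ to exact integers, and finish with binary exponentiations and the sum from Proposition~\ref{prop:congruence} modulo each $p^s$. The only difference is cosmetic bookkeeping (e.g.\ you leave the cost of the $\leq s$ divisions $j/p \bmod 2^M$ implicit, which the paper itemises as a separate step), so nothing of substance is missing.
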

\begin{proof}
Let $M = \lceil \log_2(3 (nN/\pi)^{s+1}) \rceil + 1$. For this choice of $M$, by Lemma \ref{lem:Fp-bound} we have $|F_p(j)| < 2^M / 2$ for all $p \in P$, $0 \leq j < p$, so to compute $F_p(j)$ it suffices to determine it modulo $2^M$. Note that $M = O(s \log n)$.

We perform the following steps, each of which uses $O(s^2 \log n)$ space.

{\it Step 1.} Compute $G_k$ for $1 \leq k \leq s$ using (for example) the algorithm of \cite{BH-bernoulli}. This costs $s^2 \log^{2+o(1)} s = s^2 \log^{2+o(1)} n$ bit operations.

{\it Step 2.} Compute $\binom{n}{k}$ for $1 \leq k \leq s$. Using a straightforward algorithm this can be done in $O(s^2 \log^2 n)$ bit operations.

{\it Step 3.} Compute the coefficients of $F(x)$, by computing the products $\binom{n}{k} G_k$ for $0 \leq k \leq s$. Each product needs $s \log^{2+o(1)} n$ bit operations. The total cost is $s^2 \log^{2+o(1)} n$ bit operations.

{\it Step 4.} Compute $j/p \pmod{2^M}$ for each $p \in P$, $0 \leq j < p$. Each division costs $M \log^{1+o(1)} M = s \log^{2+o(1)} n$ bit operations. Since we have assumed that $\sum_{p \in P} p \leq s$, the total cost is $s^2 \log^{2+o(1)} n$ bit operations.

{\it Step 5.} Regarding $F(x)$ as a polynomial in $(\ZZ/2^M\ZZ)[x]$, evaluate simultaneously $F(j/p) \pmod{2^M}$ for all $p \in P$, $0 \leq j < p$. This costs $s^2 \log^{3+o(1)} n$ bit operations.

{\it Step 6.} For each $p \in P$, $0 \leq j < p$, recover $F_p(j) = p^{s-1} F(j/p) \pmod{2^M}$, and hence the exact integer $F_p(j)$. Since $p^{s-1} \leq 2^M$, we may compute $p^{s-1}$, and then $F_p(j)$, in $M \log^{1+o(1)} M = s \log^{2+o(1)} n$ bit operations, and thus the total cost is $s^2 \log^{2+o(1)} n$ bit operations.

{\it Step 7.} For each $p \in P$, $0 \leq j < p$, compute $j^{n-s} \pmod{p^s}$. Each power costs $(\log n) (s \log^{2+o(1)} n) = s \log^{3+o(1)} n$ bit operations, so the total cost is $s^2 \log^{3+o(1)} n$ bit operations.

{\it Step 8.} Use Proposition \ref{prop:congruence} to recover $G_n \pmod{p^s}$ for each $p \in P$. The cost is $s^2 \log^{2+o(1)} n$ bit operations.
\end{proof}

\begin{rem}
The complexity of step 7 can be improved, by computing first $q^{n-s} \pmod{p^s}$ for \emph{primes} $q < p$, and then using $(j_1 j_2)^{n-s} = j_1^{n-s} j_2^{n-s}$ for composite $j = j_1 j_2$. This saves a factor of $\log n$ in this step, provided that $p$ is not too small, say $p > n^c$ for any fixed $c > 0$. This will be the case for almost all primes $p$ used in the proof of Theorem \ref{thm:main}.
\end{rem}

\begin{rem}
In a practical setting, one may wish to replace the ring $\ZZ/2^M\ZZ$ by $\ZZ/T\ZZ$ where $T$ is a suitably large integer not divisible by any $p \in P$. For example, one could take $T$ to be a product of many word-sized primes $q$ for which there exist efficient number-theoretic transforms modulo $q$. Under this scheme, the expensive evaluation in Step 5 could be performed for each $q$ separately, and then the $F_p(j)$ could be reconstructed in Step 6 using the Chinese remainder theorem. This approach does not change the asymptotic complexity, but potentially yields a drastic improvement in memory locality.
\end{rem}

\begin{rem}
Further practical savings may be realised by using the easily-proved fact that $G_n(1 - x) = -G_n(x)$ for even $n$, so that
 \[ G_n = 2\sum_{j=1}^{(p-1)/2} (-1)^j j^{n-s} F_p(j) \pmod{p^s}. \]
Coupled with the observation that essentially half of the coefficients of $F(x)$ are zero, this leads to a savings of a factor of two in the main evaluation step.
\end{rem}

Now we may prove the main result.
\begin{proof}[Proof of Theorem \ref{thm:main}]
Recall that $1/3 \leq \alpha \leq 1/2$. We will take
 \[ N = \lfloor n^{\alpha} \log^{1 - \alpha} n \rfloor, \qquad s = \lfloor 2 n^{1-\alpha} \log^\alpha n \rfloor. \]
We may assume that $n$ is large enough so that $n \geq s \geq N \geq 4$. In particular we may assume that the hypotheses of Proposition \ref{prop:padic-bound} are satisfied.

Let $P$ be the set of odd primes $p < N$, so that $|P| = O(N/\log N) = O(n^\alpha \log^{-\alpha} n)$. Let $r = \lfloor 2 n^{1-2\alpha} \log^{2\alpha - 1} n \rfloor$. Note that $r \geq 1$ for sufficiently large $n$. Partition $P$ into $d$ sets $P_1, \ldots, P_d$ of cardinality at most $r$, where $d = O(|P|/r) = O(n^{3\alpha-1}\log^{1-3\alpha} n)$. For each $i$ we have $\sum_{p \in P_i} p \leq |P_i|N \leq rN \leq s$.

Apply Proposition \ref{prop:padic-bound} to each set $P_i$ separately. The space usage for each invocation is $O(s^2 \log n) = O(n^{2 - 2\alpha} \log^{2\alpha + 1} n)$. This space may be reused for each $P_i$. The total time cost is $d s^2 \log^{3+o(1)} n = n^{1+\alpha} \log^{4 - \alpha + o(1)} n$.

At this stage we have computed $G_n \pmod{p^s}$ for all $p \in P$. This is enough to determine $G_n$ (for sufficiently large $n$), because
 \[ \log \prod_{p \in P} p^s = s \sum_{3 \leq p < N} \log p \sim s N = 2 n \log n + O(n), \]
whereas $\log G_n = n \log n + O(n)$. Using fast Chinese remaindering we may then recover $G_n$, and hence $B_n$, in $n^{1+o(1)}$ bit operations.
\end{proof}

\begin{rem}
\label{rem:1/2}
We sketch an algorithm that improves the time and space complexities to respectively $n^{3/2} \log^{3 + o(1)} n$ and $O(n \log n)$ in the case $\alpha = 1/2$. Consider the algorithm of Proposition \ref{prop:padic-bound} applied to a set $P = \{p\}$ consisting of a single prime. The evaluation points $j/p$, for $0 \leq j < p$, now form an arithmetic progression. We relax the condition $p \leq s$, instead allowing $p$ as large as $s \log s$. Instead of evaluating at all $p$ points simultaneously, we first evaluate at only $s$ points, and then use the value-shifting algorithm of \cite[Theorem~3.1]{Sho-small} (alternatively the algorithm of \cite[Theorem~5]{BGS-recurrences}) to evaluate at the remaining $p - s$ points, in blocks of $s$ points at a time. Then in the proof of Theorem \ref{thm:main}, we take $s = \lfloor 2n^{1/2} \rfloor$ and $N = \lfloor n^{1/2} \log n \rfloor$, and only use Proposition \ref{prop:padic-bound} for one prime at a time. This leads to the complexity bounds stated above; we omit the proof, which is similar to that of Theorem \ref{thm:main}.
\end{rem}


{\it Acknowledgments.} Many thanks to Joe Buhler, Bernd Kellner and an anonymous referee for their comments on a draft of this paper. The author was partially supported by the Australian Research Council, DECRA Grant DE120101293.

\bibliographystyle{amsalpha}
\bibliography{bernpadic}

\end{document}